\documentclass[reqno,12pt,twoside]{article}
\usepackage{fullpage}
\usepackage{amssymb}
\usepackage{amsmath}
\usepackage{amsthm}
\numberwithin{equation}{section}
\newtheorem{theorem}{Theorem}

\newtheorem{remark}{Remark}

\newtheorem{corollary}{Corollary}

\newtheorem{lemma}{Lemma}
\newcommand{\Nzero}{\mathbb N_0}
\newcommand{\trbinom}[2]{\left\langle\binom{#1}{#2}\right\rangle}
\newcommand{\pos}[1]{\left(#1\right)_{+}}
\newcommand{\card}[1]{\left|#1\right|}

\begin{document}

\title{\Large On the size of $h$-fold sumsets \footnote{The first author is supported by the National Natural Science Foundation of China (Grant No.
12301003), the Anhui Provincial Natural Science Foundation (Grant No. 2308085QA02) and
the University Natural Science Research Project of Anhui Province (Grant No. 2022AH050171). The second author is supported by the National Natural Science Foundation of China (Grant No.
12371005).}}

\author{\large Shi-Qiang Chen$^{a}$,
\large ~~~Quan-Hui Yang$^{b,}$\thanks{Corresponding author. E-mail addresses: csq20180327@163.com (S.-Q. Chen), yangquanhui01@163.com (Q.-H. Yang)}}
\date{} \maketitle
 \vskip -3cm
\begin{center}
\vskip -1cm { \small
\begin{center}
 $a$. School of Mathematics and Statistics,
\end{center}
\begin{center}
Anhui Normal University, Wuhu 241002, P.R. China
\end{center}}
\vskip -1cm { \small
\begin{center}
 $b$. School of
Mathematical Sciences and Ministry of Education Key Laboratory for NSLSCS,
\end{center}
\begin{center}
Nanjing Normal University, Nanjing 210023,
P.R. China
\end{center}}
\end{center}

{\bf Abstract.} Let $h$ be a positive integer, and let $A$ be a finite set of integers. We derive an exact formula for $|hA|$. Furthermore, let
$A=\{0,1,\ldots,s,a,b\}$, $1\leq s<a<b$, and write $b=qa+r$ with $0\leq r<a$. By using generating function, we prove that $|hA|$ equals a definite explicit formula expressed in terms of certain truncated binomial coefficients for all positive integers $h$ if and only if $r=0$ or $qs+r\geq a$. This generalizes a result of Nathanson.

\medskip
\noindent{\bf Keywords:} $h$-fold sumset; sumset size;
generating function; truncated binomial coefficient

\medskip
\noindent 2020 {\it Mathematics Subject Classification}: 11B13.

\section{Introduction}
Let $\mathbb{N}$ denote the set of positive integers,
$\mathbb{N}_0=\mathbb{N}\cup\{0\}$, and let $\mathbb{Z}$ denote the set
of integers.  For integers $u$ and $v$ with $u\leq v$, we define
\[
[u,v]=\{c\in\mathbb{Z}:u\leq c\leq v\}.
\]
If $A\subseteq\mathbb{Z}$ is finite and nonempty and $h\in\mathbb{N}$,
the $h$-fold sumset of $A$ is
\[
hA=\{a_1+\cdots+a_h:a_i\in A\text{ for }1\leq i\leq h\}.
\]
The problem of determining $|hA|$ is a basic topic in additive number
theory.  It is closely related to structural and extremal questions for
higher sumsets and to Frobenius-type representation problems
\cite{EliahouKervaire,GranvilleShakan,GranvilleWalker,
GyarmatiMatolcsiRuzsa,Lev1996,Lev,Nathanson1972}.

For $h,k\in\mathbb{N}$, define the sumset-size set
\[
\mathcal{R}(h,k)=\{|hA|:A\subseteq\mathbb{Z},\ |A|=k\}.
\]
Injective affine transformations do not change the size of an
$h$-fold sumset: for a set $B\subseteq \mathbb{Z}$, if there are integers $u \neq 0$ and $v$ such that
\[B=u\ast A+v=\{ua +v : a \in A\},\]
then
$hB=u\ast(hA)+hv$ and hence $|hB|=|hA|$.  Consequently, after translating,
reflecting if necessary, and dividing all elements by their common
divisor, a $k$-element set may be normalized as
\[
A=\{0=a_0<a_1<\cdots<a_{k-1}\},
\qquad \gcd(a_1,\ldots,a_{k-1})=1.
\]

The classical extremal bounds are
\[
h(k-1)+1\leq |hA|\leq\binom{h+k-1}{k-1}.
\]
For $h\geq2$, the lower bound is attained exactly by arithmetic
progressions.  The upper bound is attained when every sum has a unique
representation by a nondecreasing $h$-tuple from $A$.  Exact information
between these extremes is considerably more delicate.

In 2025, Nathanson \cite{Nathanson2025} determined the complete
range when $h=2$:\\
{\bf Theorem A.} For all positive integers $k$,
\[\mathcal{R}(2,k)= \left[2k - 1,\frac{k^2 + k}{2}\right].\]
Nathanson \cite{Nathanson2025} also determined the following three element case.\\
{\bf Theorem B.} For all positive integers $h$,
\[\mathcal{R}(h,3) = \left\{ \dbinom{h+2}{2} - \dbinom{\ell}{2} : \ell \in [1,h] \right\}.\]

In 2025, Nathanson \cite{Nathanson2025} also proposed the following problem.\\
\noindent \textbf{Problem 1.} The structure of sumsets of sets of four integers is still not
understood. Compute the set of sumset sizes $\mathcal{R}(h,4)$. What is the cardinality of this
set?

For four element sets, the upper endpoint is the tetrahedral number
$\binom{h+3}{3}$, but the values below it already display a much richer
arithmetic structure; see
\cite{NathansonExplicit,NathansonTriangular,NathansonTetrahedral,OBryant}.

In particular, Nathanson \cite{NathansonTetrahedral} exhibited the
following family.\\
{\bf Theorem C.}  Let $h \geq 1$. For all $i_0 \in [0, h-1]$, let
\[
p = 1 + (i_0 - 1)(h + 1)
\]
and
\[
c = h^2 + h + 1 - p = (h + 1 - i_0)(h + 1).
\]
The set
\[
A = \{0, 1, h + 1, c\}
\]
satisfies $|A| = 4$ and
\[
|hA| = \dbinom{h+3}{3} - \dbinom{i_0+2}{3}.
\]

Nathanson \cite{NathansonTetrahedral} also proposed the following two problems.

\noindent \textbf{Problem 2.} It is of interest to compute, for all $h \geq 3$ and all $p \in [0, h^2 - 1]$, the sumset sizes of the sets
\[
A = \{0, 1, h + 1, h^2 + h + 1 - p\}.
\]

\noindent \textbf{Problem 3.} For all $h \geq 3$, compute the set of sumset sizes of the sets
\[
A = \{0, 1, a, b\}
\]
for $2 \leq a \leq h$ and $a + 1 \leq b \leq ha + 1$.

We treat these questions in a common framework that is valid more
generally for every finite normalized set.  The basic tool is the
shortest representation function.  Let
\[
A=\{0=a_0<a_1<\cdots<a_k=m\},
\qquad \gcd(a_1,\ldots,a_k)=1.
\]
For $n\in\mathbb{N}_0$, define
\[
\ell_A(n)=
\min\left\{\sum_{j=1}^k x_j:
n=\sum_{j=1}^k a_jx_j,\ x_j\in\mathbb{N}_0\right\},
\]
with $\ell_A(n)=+\infty$ when $n$ is not representable. Hence,
\[
n\in hA\quad\Longleftrightarrow\quad \ell_A(n)\leq h.
\]

For an integer $0\leq v<m$, let
\[
C_v=
\left\{
(x_1,\ldots,x_{k-1})\in\{0,\ldots,m-1\}^{k-1}:
\sum_{j=1}^{k-1}a_jx_j\equiv v\pmod m
\right\}.
\]
From $\gcd(a_1,\dots,a_{k-1},m)=1$, it follows that each $C_v$ is nonempty.
For $x\in C_v$, define
\[
L_v(x)
=\frac{\sum_{j=1}^{k-1}a_jx_j-v}{m},
\qquad
D_v(x)
=\sum_{j=1}^{k-1}x_j-L_v(x).
\]
The numerator defining \(L_v(x)\) is divisible by \(m\), so both
\(L_v(x)\) and \(D_v(x)\) are integers.

For an integer $0\leq v<m$ and $t\in\Nzero$, define
\[
M_t(v)=\min_{\substack{x\in C_v\\L_v(x)\leq t}}D_v(x).
\]
Set $M_t(v)=+\infty$ if the set over which the minimum is taken is empty.
Starting from its first finite value, $M_t(v)$ is nonincreasing and
eventually constant.  Write its finite strict records as
\[
M_t(v)=d_{v,i}
\qquad
\bigl(u_{v,i}\leq t<u_{v,i+1}\bigr),
\qquad 1\leq i\leq r_v,
\]
where
\[
u_{v,1}<\cdots<u_{v,r_v},
\qquad
d_{v,1}>\cdots>d_{v,r_v},
\qquad
u_{v,r_v+1}=+\infty.
\]
Thus $M_t(v)=+\infty$ for $0\leq t<u_{v,1}$.

For an integer $x$ and $j\in\Nzero$, define $(x)_+=\max(x,0)$ and define the truncated binomial
coefficient by
\[
\trbinom{x}{j}
=
\begin{cases}
\binom{x}{j},&x\geq j,\\
0,&x<j.
\end{cases}
\]
In particular, $\trbinom{x}{1}=\max(x,0)$.

Our first result gives the exact size of $hA$.

\begin{theorem}\label{thm1}
Let $h\in\mathbb{N}$ and let
$A=\{0=a_0<a_1<\cdots<a_k=m\}$ with
$\gcd(a_1,\ldots,a_k)=1$.  Then
\begin{align*}
\card{hA}
={}&\sum_{v=0}^{m-1}\Biggl\{
\sum_{i=1}^{r_v-1}
\trbinom{h-u_{v,i}-d_{v,i}+1}{1}
\\
&\qquad
-\sum_{i=1}^{r_v-1}
\trbinom{h-u_{v,i+1}-d_{v,i}+1}{1}
\\
&\qquad
+\trbinom{h-u_{v,r_v}-d_{v,r_v}+1}{1}
\Biggr\}.
\end{align*}
\end{theorem}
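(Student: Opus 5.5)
We split $\mathbb{N}_0$ into residue classes modulo $m$, compute $\ell_A$ exactly on each class in terms of $M_t(v)$, and then count, record interval by record interval, the $t$ with $t+M_t(v)\le h$. Every $n\in\mathbb{N}_0$ is written uniquely as $n=tm+v$ with $t\in\mathbb{N}_0$ and $0\le v<m$. Since $n\in hA$ iff $\ell_A(n)\le h$, we have
\[
\card{hA}=\sum_{v=0}^{m-1}\#\{t\in\mathbb{N}_0:\ \ell_A(tm+v)\le h\}.
\]

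\textbf{Step 1: the key identity.} We will show
\[
\ell_A(tm+v)=t+M_t(v)\qquad(0\le v<m,\ t\in\mathbb{N}_0),
\]
with both sides $+\infty$ simultaneously.

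For "$\le$": take $x\in C_v$ with $L_v(x)\le t$ and set $x_k=t-L_v(x)\ge 0$. Then
\[
\sum_{j<k}a_jx_j+mx_k=mL_v(x)+v+m(t-L_v(x))=n,
\]
so this is a representation of $n$. It uses $\sum_{j<k}x_j+t-L_v(x)=t+D_v(x)$ summands.

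For "$\ge$": take a representation $n=\sum_{j\le k}a_jx_j$ using the minimal number $\ell_A(n)$ of summands. Suppose some $x_j\ge m$ with $j<k$. Replace $m$ copies of $a_j$ by $a_j$ copies of $m$; the sum is unchanged and the number of summands changes by $a_j-m<0$. This contradicts minimality, so $x_j\le m-1$ for all $j<k$. Hence $x=(x_1,\dots,x_{k-1})\in C_v$. Moreover $x_k=(n-\sum_{j<k}a_jx_j)/m=t-L_v(x)\ge0$, so $L_v(x)\le t$. The number of summands is therefore $t+D_v(x)\ge t+M_t(v)$.

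I expect this reduction to be the only real content of the proof. The point needing care is that $a_j<m$ makes the exchange strictly decrease the count, so that a minimizer automatically lies in the box $\{0,\dots,m-1\}^{k-1}$.

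\textbf{Step 2: counting within a residue class.} Fix $v$. Since $C_v\neq\emptyset$, $M_t(v)$ is eventually finite, so $r_v\ge1$. For $t<u_{v,1}$ we have $M_t(v)=+\infty$, so these $t$ contribute nothing. On the record interval $u_{v,i}\le t<u_{v,i+1}$ the condition $t+d_{v,i}\le h$ becomes $t\le h-d_{v,i}$. The number of such $t$ is
\[
\#\bigl[u_{v,i},\,\min(u_{v,i+1}-1,\,h-d_{v,i})\bigr]
=\pos{h-u_{v,i}-d_{v,i}+1}-\pos{h-u_{v,i+1}-d_{v,i}+1}.
\]
This holds in each of the three cases $h-d_{v,i}<u_{v,i}$, $u_{v,i}\le h-d_{v,i}<u_{v,i+1}$ and $h-d_{v,i}\ge u_{v,i+1}-1$; each is a one-line check.

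For the last interval, $i=r_v$, we have $u_{v,r_v+1}=+\infty$, and the count is simply $\pos{h-u_{v,r_v}-d_{v,r_v}+1}$.

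\textbf{Step 3: assembling.} Recall that $\trbinom{x}{1}=\pos{x}$. Summing the counts of Step 2 over $i=1,\dots,r_v$ and then over $v=0,\dots,m-1$ yields exactly the formula of Theorem~\ref{thm1}.

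No finiteness issue arises. The condition $\ell_A(n)\le h$ forces $n\le hm$, so only finitely many $t$ are counted. Equivalently, each positive part in the formula is finite.
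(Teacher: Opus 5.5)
Your proposal is correct and follows the paper's proof essentially step for step. Step 1 is the paper's Lemmas 1 and 2: the exchange of $m$ copies of $a_j$ for $a_j$ copies of $m$ yields $\ell_A(tm+v)=t+M_t(v)$. Step 2 is Lemma 3's three-case count on each record interval, which you then sum over $i$ and $v$ as the paper does; you are a bit more explicit about the ``$\le$'' direction and the $+\infty$ case, but the argument is the same.
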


Theorem~\ref{thm1} holds for every finite normalized set, so it works for any choice of parameters in Problems 2 and 3. However, the record values depend on the elements of $A$, and usually there is no simple closed form. For this reason, we only study sets of the form $\{0,1,\dots,s,a,b\}$, and give the necessary and sufficient condition for the size of the $h$-fold sumset of this set to admit a definite explicit formula in terms of certain truncated binomial coefficients.

For $s\in\mathbb{N}$ and integer $N\geq1$, put
\begin{equation}\label{eq11}
L_{s,N}=\left\lfloor\frac{N-1}{s}\right\rfloor,
\qquad
\tau_{s,N}=N-1-sL_{s,N},
\end{equation}
and define
\[
P_{s,N}(z)
=1+(s-1)z+(\tau_{s,N}-s)z^{L_{s,N}+1}
-\tau_{s,N}z^{L_{s,N}+2},
\qquad
P_{s,0}(z)=0.
\]
Write
\[
P_{s,N}(z)=\sum_e p_{N,e}z^e.
\]
\begin{theorem}\label{thm2}
Let $s,a,b$ be integers with $1\leq s<a<b$, and let
\[
A=\{0,1,\ldots,s,a,b\}.
\]
Write $b=qa+r$, where $0\leq r<a$.  Then
\begin{align}\label{eq01}
\card{hA}
={}&
\sum_e p_{a,e}
\left\{
\trbinom{h-e+3}{3}
-\trbinom{h-q-e+3}{3}
\right\}
\notag\\
&+\sum_e p_{r,e}
\trbinom{h-q-e+2}{2}
\end{align}
holds for every $h\in\mathbb{N}$ if and only if $r=0$ or $qs+r\geq a$.
\end{theorem}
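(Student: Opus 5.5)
We reduce the count of $hA$ to a count of lattice points and encode it in a generating function. Since $a_k=b$, every $n\in hA$ can be written with multiplicities $x_b$ of $b$ and $x_a$ of $a$, plus a remainder $w$ that lies in $\ell A'$ with $A'=\{0,1,\dots,s\}$. Here $\ell_{A'}(w)=\lceil w/s\rceil$. For fixed $(x_a,x_b)$ the remainders cover an interval, so $\ell_A(n)\le h$ means $n=x_bb+x_aa+w$ with $x_a+x_b+\lceil w/s\rceil\le h$. We organize $hA$ by the residue of $n$ modulo $a$. Put $F(z)=\sum_{h\ge0}|hA|z^h$ (with $|0A|=1$). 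The goal is to show that $F(z)$ equals the generating function $G(z)$ of the right side of \eqref{eq01}. Since $\sum_h\trbinom{h-e+3}{3}z^h=z^{e}/(1-z)^4$ and $\sum_h\trbinom{h-e+2}{2}z^h=z^e/(1-z)^3$, we have
\[
G(z)=\frac{P_{s,a}(z)(1-z^q)}{(1-z)^4}+\frac{z^qP_{s,r}(z)}{(1-z)^3}.
\]
We also note that $P_{s,N}(z)/(1-z)^2=\sum_h(\text{number of }w\in[0,N-1]\text{ with }\lceil w/s\rceil\le h)\,z^h\cdot(1-z)^{-1}$. More precisely, $P_{s,N}(z)/(1-z)$ is the generating function of $\min(N,sh+1)$. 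This follows from the definitions of $L_{s,N},\tau_{s,N}$ and is a short check.

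The first step is to treat the elements that need fewer than $q$ copies of $b$, or more precisely the part covered by $a$ alone. Using only $\{0,\dots,s,a\}$, the number $n=x_aa+w$ with $0\le w<a$ costs $x_a+\lceil w/s\rceil$. This representation is optimal, because replacing $a$ by small elements costs at least $\lceil a/s\rceil\ge 1$ extra. So the count of $h\{0,\dots,s,a\}$ has generating function $P_{s,a}(z)/(1-z)^3$. Next we add $b$. Each unit of $x_b$ shifts by $b=qa+r$ at cost $1$ instead of the cost $q+\lceil r/s\rceil$ of the alternative. We stratify $hA$ by the number $j$ of $b$'s in a minimal representation. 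The elements using $j\ge1$ copies of $b$ that are not already obtainable more cheaply form the new contributions. In the two favourable cases, $r=0$ or $qs+r\ge a$, a cheapest representation can always be taken with either $x_b=0$ or $x_a<q$. When $r=0$, $b=qa$, so $q$ copies of $a$ can be replaced by one $b$, and then $x_a\le q-1$. When $qs+r\ge a$, the sum $b$ is reachable within the block so that an "exchange" lemma holds. Then $n$ corresponds bijectively to the triples $(x_b,x_a,w)$ with $x_a\le q-1$ (or unrestricted when $x_b=0$) and $w<a$, together with the overflow residues $w\in[a,a+r)$ coming from $x_aa+w$ when $x_a=q-1$. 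Summing $z^{\text{cost}}$ over these triples gives exactly $P_{s,a}(1-z^q)/(1-z)^4+z^qP_{s,r}/(1-z)^3$. The factor $(1-z^q)/(1-z)$ comes from $x_a\in[0,q-1]$, the extra $1/(1-z)$ comes from $x_b$ and $h$-cumulation, and the $P_{s,r}$ term counts the $r$ residues that are left uncovered. Alternatively, we can derive all of this from Theorem \ref{thm1} by computing the records $u_{v,i},d_{v,i}$ explicitly in these cases.

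For the converse, suppose $r\ge1$ and $qs+r<a$. We exhibit a specific $h$ where the formula fails. The natural candidate is $h=q+\lceil r/s\rceil$, or the first $h$ where $q a+r$ combined with the small elements leaves a gap. The element $b+w'$ with $w'$ chosen so that $r+w'$ exceeds $qs$ has a cheaper representation via $b$ that the formula's bijection double-counts, or else the formula counts an element that is absent. We compute $|hA|$ directly for this small $h$ and compare it with the coefficient of $G$.

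The main obstacle is the exchange lemma in the sufficiency direction: showing that the minimal representations are exactly parametrized by the claimed triples, without overlaps, precisely when $qs+r\ge a$. This is also where the failure of the converse will be located.
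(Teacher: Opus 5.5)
\textbf{Gap in sufficiency.} Your target series $G(z)$ is correct. Your sufficiency plan is also the paper's plan: write $n=x_bb+x_aa+w$ with $x_aa+w<b$, give it cost $x_b+x_a+\lceil w/s\rceil$, and sum $z^{\mathrm{cost}}$. (The paper writes $n=tb+v$, $0\le v<b$, with cost $t+\ell_B(v)$ and $B=\{0,\dots,s,a\}$.) But the step that carries the theorem is only named, never proved. You must show that this cost equals $\ell_A(n)$, i.e.\ $\ell_A(tb+v)=t+\ell_B(v)$ for $0\le v<b$.

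Your stated exchange claim (some cheapest representation has $x_b=0$ or $x_a<q$) cannot deliver this. It is satisfied by the very element that breaks the formula in the bad case: there $(q+1)a$ has its cheapest representation with $x_b=0$. Your parametrization with $x_a$ ``unrestricted when $x_b=0$'' is not a bijection, since those triples alone already cover every $n$. It is also not what your series actually sums.

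Your $r=0$ trade ($q$ copies of $a$ for one $b$) is fine. For $r>0$, however, the comparison ``cost $1$ versus $q+\lceil r/s\rceil$'' ignores the carry when the small part satisfies $w+r\ge a$, and that is exactly where failure occurs. The missing ingredient (the paper's key inequality) is $\ell_B(n+b)\ge\ell_B(n)+1$ for all $n\ge 0$. Together with $\ell_A(tb+v)=t+\min_{0\le j\le t}\{\ell_B(v+jb)-j\}$, it gives the identity. For $n=ua+x$ with $0\le x<a$, the difference equals $q+\lfloor (x+r)/a\rfloor+\lceil [x+r]_a/s\rceil-\lceil x/s\rceil$. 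This is $\ge q\ge 1$ when $x<a-r$, and $\ge q+1-\lceil (a-r)/s\rceil$ when $x\ge a-r$. The latter is $\ge 1$ precisely when $qs\ge a-r$, and this is the only place the hypothesis is used.

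A minor slip: $P_{s,N}(z)/(1-z)$ is the polynomial $\sum_{x<N}z^{\lceil x/s\rceil}$. The generating function of $\min(N,sh+1)$ is $P_{s,N}(z)/(1-z)^2$.

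\textbf{Gap in necessity.} You give no witness, and the mechanism is described backwards. The coefficient of $z^h$ in $G$ equals $|U_h|$, where $U_h=\{tb+v: 0\le v<b,\ t+\ell_B(v)\le h\}$. Moreover $U_h\subseteq hA$ for every $h$, because $\ell_A(tb+v)\le t+\ell_B(v)$. So the formula never double-counts and never counts an absent element. It can only miss elements of $hA$ whose cheapest representation uses fewer than $\lfloor n/b\rfloor$ copies of $b$.

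With this containment, a single such element suffices, and no direct computation of $|hA|$ is needed. The paper takes $h=q+1$ and $n=(q+1)a=b+(a-r)\in(q+1)A$. Its only possible slot in $U_{q+1}$ is $t=1$, $v=a-r$, at cost $1+\lceil (a-r)/s\rceil\ge q+2$, because $a-r>qs$. Your candidate $h=q+\lceil r/s\rceil$ would also work, with witness $(q+1)a+s(\lceil r/s\rceil-1)$. However, the proposal produces neither that witness nor the containment that makes one witness enough.
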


Taking $s=1$ gives the four-element case in a particularly transparent
six-term form.

\begin{corollary}\label{coro1}
Let $a, b$ be integers with $1<a<b$. Write $b=qa+r$, where $0\leq r<a$.  Then
\begin{align}\label{eq02}
\card{h\{0,1,a,b\}}
={}&\trbinom{h+3}{3}
-\trbinom{h-q+3}{3}
-\trbinom{h-a+3}{3}
+\trbinom{h-a-q+3}{3}\notag\\
&+\trbinom{h-q+2}{2}
-\trbinom{h-q-r+2}{2}
\end{align}
holds for every $h\in\mathbb{N}$ if and only if $r=0$ or $q+r\geq a$.
\end{corollary}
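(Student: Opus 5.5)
The plan is to deduce Corollary~\ref{coro1} directly from Theorem~\ref{thm2} by specializing to $s=1$. The condition $qs+r\geq a$ then becomes $q+r\geq a$. So it only remains to check that, for $s=1$, the right-hand side of \eqref{eq01} coincides identically in $h$ with the six-term expression \eqref{eq02}.

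\textbf{Step 1: the polynomials $P_{1,N}$.} By \eqref{eq11} with $s=1$ and $N\geq1$,
\[
L_{1,N}=N-1,\qquad \tau_{1,N}=N-1-(N-1)=0 .
\]
Hence
\[
P_{1,N}(z)=1+0\cdot z+(0-1)z^{N}-0\cdot z^{N+1}=1-z^{N},
\]
so $p_{N,0}=1$, $p_{N,N}=-1$, and all other coefficients vanish. For $N=0$ we have $P_{1,0}=0$ by definition, so all coefficients vanish.

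\textbf{Step 2: substitute into \eqref{eq01}.} Take $N=a$, which is allowed since $a\geq2$. The first sum becomes
\[
\trbinom{h+3}{3}-\trbinom{h-q+3}{3}-\trbinom{h-a+3}{3}+\trbinom{h-a-q+3}{3},
\]
which is exactly the first four terms of \eqref{eq02}.

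For the second sum there are two cases.
\begin{itemize}
\item If $r\geq1$, then $P_{1,r}=1-z^r$, and the sum gives $\trbinom{h-q+2}{2}-\trbinom{h-q-r+2}{2}$.
\item If $r=0$, then $P_{1,0}=0$ and the sum is $0$. The last two terms of \eqref{eq02} also cancel, since $\trbinom{h-q+2}{2}-\trbinom{h-q-0+2}{2}=0$.
\end{itemize}
In either case the right-hand sides of \eqref{eq01} and \eqref{eq02} agree for all $h$.

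\textbf{Step 3: conclude.} Since the two expressions are identical, \eqref{eq02} holds for all $h\in\mathbb{N}$ if and only if \eqref{eq01} does with $s=1$. By Theorem~\ref{thm2}, that happens if and only if $r=0$ or $q+r\geq a$.

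The only point needing care is the boundary case $r=0$, where $P_{1,0}$ is defined separately rather than through \eqref{eq11}. I verify it directly rather than through the formula $1-z^N$; it goes through because both sides vanish.
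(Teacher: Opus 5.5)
Your proposal is correct and matches the paper's proof: specialize Theorem~\ref{thm2} to $s=1$, compute $P_{1,a}(z)=1-z^a$ and $P_{1,r}(z)=1-z^r$, and read off the six terms. Your separate check of $r=0$ is harmless but not needed, since $1-z^0=0=P_{1,0}(z)$ anyway; this is why the paper can write $P_{1,r}(z)=1-z^r$ uniformly in $r$.
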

\begin{proof}
Take $s=1$ in Theorem \ref{thm2}.  Then
\[
P_{1,a}(z)=1-z^a,\qquad P_{1,r}(z)=1-z^r.
\]
Thus \eqref{eq02} follows.
\end{proof}

\begin{remark}
Corollary \ref{coro1} contains Nathanson's result:
take $a=h+1$, $r=0$, and $q=h+1-i_0$.  Outside the regular range
$r=0$ or $q+r\geq a$, the short formula \eqref{eq02} fails in general,
but Theorem \ref{thm1} continues to give the exact value.
\end{remark}

\section{Proof of Theorem \ref{thm1}}
Throughout this section, $A$ is as in Theorem \ref{thm1}, with
$a_k=m$.  We begin with three lemmas.

\begin{lemma}\label{lema11}Let $n$ be a nonnegative integer.
Suppose that
\[
n=\sum_{j=1}^{k-1}a_jx_j+mw,
\qquad
\sum_{j=1}^{k-1}x_j+w=\ell_A(n),\qquad x_1,\dots,x_{k-1},w\in\mathbb{N}_0.
\]
Then $0\leq x_j<m$ for any $1\leq j\leq k-1$.
\end{lemma}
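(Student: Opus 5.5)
We argue by contradiction, using an exchange that shortens the representation. Suppose that for some index $j\in[1,k-1]$ we have $x_j\geq m$. The idea is to trade $m$ copies of $a_j$ for $a_j$ copies of $m=a_k$. The sum is unchanged, since $m\cdot a_j=a_j\cdot m$. Explicitly, set
\[
x_j'=x_j-m,\qquad w'=w+a_j,\qquad x_i'=x_i\ (i\neq j).
\]
All of these are still nonnegative integers, and
\[
\sum_{i=1}^{k-1}a_ix_i'+mw'=n-a_jm+ma_j=n.
\]
So $(x_1',\dots,x_{k-1}',w')$ is again an admissible representation of $n$ by elements of $A$.

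Next we compare lengths. The new length is
\[
\sum_{i=1}^{k-1}x_i'+w'=\ell_A(n)-m+a_j.
\]
Since $1\leq j\leq k-1$, we have $a_j<a_k=m$, so this length is strictly less than $\ell_A(n)$. That contradicts the minimality of $\ell_A(n)$ in its definition. Hence $x_j\leq m-1$ for every $j$. The bound $x_j\geq 0$ holds because the $x_j$ lie in $\mathbb{N}_0$.

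The only point needing care is that the exchange really lowers the length. This relies on the strict inequality $a_j<m$, which holds because the elements of $A$ are strictly increasing and $m$ is the largest element. Nothing beyond the definition of $\ell_A$ is needed. In particular, the gcd hypothesis is not used here; it only ensures that $\ell_A(n)$ is finite for the $n$ under consideration, and here finiteness is already implicit, since $n$ is assumed to have a representation.
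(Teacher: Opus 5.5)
Your proof is correct and is essentially the paper's own argument. Like the paper, you replace $m$ copies of $a_j$ by $a_j$ copies of $m$, which keeps the sum equal to $n$ and strictly lowers the length because $a_j<m$, contradicting the minimality of $\ell_A(n)$.
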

\begin{proof}
Assume that there exists $j_0\in[1,k-1]$ such that \(x_{j_0}\geq m\). Then
\[n=\sum_{\substack{j\in[1,k-1],\\j\neq j_0}}a_jx_j+(x_{j_0}-m)a_{j_0}+m(w+a_{j_0}).\]
Noting that $a_{j_0}<m$, we have
\[\sum_{\substack{j\in[1,k-1],\\j\neq j_0}}x_j+(x_{j_0}-m)+(w+a_{j_0})<\sum_{j=1}^{k-1}x_j+w=\ell_A(n),\]
a contradiction. Thus, \(0\leq x_j<m\) for any $j\in[1,k-1]$.

This completes the proof of Lemma \ref{lema11}.

\end{proof}

\begin{lemma}\label{lema1}
For any $t\in\Nzero$ and any integer $0\leq v<m$, we have
\begin{equation}\label{eq21}
\ell_A(tm+v)
=t+M_t(v).
\end{equation}
\end{lemma}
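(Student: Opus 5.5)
We prove \eqref{eq21} by showing the two inequalities $\ell_A(tm+v)\le t+M_t(v)$ and $\ell_A(tm+v)\ge t+M_t(v)$, with the convention that both sides are $+\infty$ together. The idea is that a representation of $n=tm+v$ is determined by the coefficients $x=(x_1,\dots,x_{k-1})$ of the small elements. The coefficient $w$ of $m$ is then forced, and the total length depends only on $x$.

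\emph{Upper bound.} Suppose $M_t(v)<\infty$, and pick $x\in C_v$ with $L_v(x)\le t$ and $D_v(x)=M_t(v)$. Set $w=t-L_v(x)\in\mathbb N_0$. By the definition of $L_v(x)$ we have $\sum_j a_jx_j = mL_v(x)+v$, hence
\[
\sum_{j=1}^{k-1}a_jx_j+mw=mL_v(x)+v+m\bigl(t-L_v(x)\bigr)=tm+v,
\]
so this is a valid representation. Its length is
\[
\sum_j x_j+w=\sum_j x_j+t-L_v(x)=t+D_v(x)=t+M_t(v).
\]
Therefore $\ell_A(tm+v)\le t+M_t(v)$.

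\emph{Lower bound.} First, $\ell_A(n)<\infty$ for every $n\ge 0$, since $1\in A$ is not needed here: $\gcd=1$ alone does not give representability of every $n$. So we argue as follows. If $\ell_A(n)=\infty$, the inequality is trivial. Otherwise take an optimal representation $n=\sum_j a_jx_j+mw$ with length $\ell_A(n)$. By Lemma~\ref{lema11}, $0\le x_j<m$ for every $j$. Reducing the representation mod $m$ gives $\sum_j a_jx_j\equiv v \pmod m$, so $x\in C_v$. Moreover
\[
mL_v(x)=\sum_j a_jx_j-v=tm-mw,
\]
so $L_v(x)=t-w\le t$. Hence $x$ is admissible in the minimum defining $M_t(v)$, and
\[
\ell_A(n)=\sum_j x_j+w=\sum_j x_j+t-L_v(x)=t+D_v(x)\ge t+M_t(v).
\]

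Combining the two bounds handles the infinite cases as well. If $M_t(v)=\infty$, the lower-bound argument shows that no representation can exist, since any representation would produce an admissible $x$; thus $\ell_A(n)=\infty$. Conversely, if $\ell_A(n)=\infty$, the upper-bound argument shows $M_t(v)=\infty$.

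The only real step is invoking Lemma~\ref{lema11} to place the $x$-part of an optimal representation inside the box $\{0,\dots,m-1\}^{k-1}$, which is what makes $x$ land in $C_v$. Everything else is direct bookkeeping.
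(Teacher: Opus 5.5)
Your proof is correct and is essentially the paper's argument: Lemma~\ref{lema11} places the $x$-part of an optimal representation in $C_v$, and the substitution $w=t-L_v(x)$ turns the length into $t+D_v(x)$; your two inequalities just spell out the paper's chain of equal minima, including the $+\infty$ cases. One sentence should be deleted: the remark asserting $\ell_A(n)<\infty$ for every $n\geq 0$ is false in general (e.g.\ $A=\{0,2,3\}$, $n=1$) and contradicts itself, though your argument never uses it.
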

\begin{proof}
For any $t\in\Nzero$ and any integer $0\leq v<m$, by the definition of $\ell_A(tm+v)$, we have
\[
\ell_A(tm+v)=\min\left\{w+\sum_{j=1}^{k-1}x_j:
tm+v=\sum_{j=1}^{k-1}a_jx_j+mw,\
x\in\Nzero^{k-1},\ w\in\Nzero\right\}.
\]
It follows from the definition of $C_v$ and Lemma \ref{lema11} that
\[\ell_A(tm+v)=\min\left\{w+\sum_{j=1}^{k-1}x_j:tm+v=\sum_{j=1}^{k-1}a_jx_j+mw, ~x\in C_v,w\geq0\right\}.\]
By the definitions of $L_v(x)$ and $D_v(x)$, we have
\begin{align*}
\ell_A(tm+v)
=&\min_{\substack{x\in C_v,\\L_v(x)\leq t}}\left\{t-L_v(x)+\sum_{j=1}^{k-1}x_j\right\}\nonumber\\
=&\min_{\substack{x\in C_v,\\L_v(x)\leq t}}\{t+D_v(x)\}\nonumber\\
=&t+M_t(v).
\end{align*}

This completes the proof of Lemma \ref{lema1}.
\end{proof}

\begin{lemma}\label{lema2}
Let $u, U$ be integers with $u<U$, and let $d,h\in\mathbb Z$.
Then
\begin{equation}\label{eq221}
|\{t\in\mathbb Z:u\leq t<U,\ t+d\leq h\}|
=\pos{h-u-d+1}-\pos{h-U-d+1}
\end{equation}
and
\begin{equation}\label{eq222}
|\{t\in\mathbb Z:t\geq u,\ t+d\leq h\}|
=\pos{h-u-d+1}.
\end{equation}
\end{lemma}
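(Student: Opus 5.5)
The plan is to prove \eqref{eq222} first and then derive \eqref{eq221} from it by subtraction. For \eqref{eq222}, the condition $t+d\leq h$ is the same as $t\leq h-d$. So the set in question is the integer interval $[u,h-d]$, which is empty when $h-d<u$. If $h-d\geq u$, the interval has $h-d-u+1\geq 1$ elements, and this equals $\pos{h-u-d+1}$. If $h-d<u$, then $h-u-d+1\leq 0$, so $\pos{h-u-d+1}=0$, which again matches the count. This settles \eqref{eq222} in both cases.

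For \eqref{eq221}, since $u<U$, the set $\{t\geq u,\ t+d\leq h\}$ is the disjoint union of
\[
\{t:u\leq t<U,\ t+d\leq h\}
\quad\text{and}\quad
\{t:t\geq U,\ t+d\leq h\}.
\]
Both sets are finite because $t\leq h-d$. Counting each side and applying \eqref{eq222} twice, once with lower endpoint $u$ and once with lower endpoint $U$, gives
\[
\pos{h-u-d+1}=|\{t:u\leq t<U,\ t+d\leq h\}|+\pos{h-U-d+1}.
\]
Rearranging this identity yields \eqref{eq221}.

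None of these steps is a real obstacle. The only point requiring care is the sign convention for $\pos{\cdot}$ when the interval is empty, and the case split in \eqref{eq222} already covers it. The hypothesis $u<U$ is what makes the decomposition disjoint and exhaustive.
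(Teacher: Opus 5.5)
Your proof is correct but organized differently from the paper's. The paper proves \eqref{eq221} directly by a three-way case split on the position of $h-d$ relative to $u$ and $U$: when $h-d<u$, when $u\leq h-d<U$, and when $h-d\geq U$, both sides equal $0$, $h-d-u+1$, and $U-u$ respectively. It then dismisses \eqref{eq222} as an immediate count.

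You reverse the order. You establish the one-sided count \eqref{eq222} with a two-case check, then obtain \eqref{eq221} by splitting $\{t\geq u,\ t\leq h-d\}$ at $U$ and subtracting the tail count. This avoids the three cases and shows why the right side of \eqref{eq221} is a difference of two truncated terms: it is the number of integers in $[u,h-d]$ minus the number in $[U,h-d]$. That is also the telescoping mechanism behind the formula in Theorem~\ref{thm1}. The paper's case check is more pedestrian, but it gives the actual value of the count in each regime.

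One small precision: your two pieces are disjoint for any $u$ and $U$. What the hypothesis $u<U$ (indeed $u\leq U$ suffices) guarantees is that their union is exactly $\{t\geq u,\ t+d\leq h\}$, i.e.\ that $\{t\geq U\}\subseteq\{t\geq u\}$. Without it, \eqref{eq221} can fail: its right side can be negative when $u>U$.
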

\begin{proof}
If \(h-d<u\), then
\[|\{t\in\mathbb Z:u\leq t<U,\ t+d\leq h\}|=\pos{h-u-d+1}-\pos{h-U-d+1}=0.\]
If \(u\leq h-d<U\), then
\[|\{t\in\mathbb Z:u\leq t<U,\ t+d\leq h\}|=\pos{h-u-d+1}-\pos{h-U-d+1}=h-d-u+1.\]
 If \(h-d\geq U\), then
\[|\{t\in\mathbb Z:u\leq t<U,\ t+d\leq h\}|=\pos{h-u-d+1}-\pos{h-U-d+1}=U-u.\]
Hence, \eqref{eq221} is true. \eqref{eq222} is immediate from counting arguments.

This completes the proof of Lemma \ref{lema2}.
\end{proof}
\begin{proof}[Proof of Theorem \ref{thm1}]
For any \(n\in\Nzero\), $n$ has a unique expression
\[
n=tm+v,\qquad t\in\Nzero,\quad 0\leq v<m.
\]
By Lemma \ref{lema1}, we have
\[
\ell_A(tm+v)=t+M_t(v).
\]
It follows that
\begin{equation}\label{eq22}
tm+v\in hA
\quad\Longleftrightarrow\quad\ell_A(tm+v)\leq h\quad\Longleftrightarrow\quad
 t+M_t(v)\leq h.
\end{equation}

Fix an integer \(v\in[0,m-1]\).  If \(t<u_{v,1}\), then \(M_t(v)=+\infty\), and so there are no admissible integers $t$ from \eqref{eq22}.  Since
\[
u_{v,i}\leq t<u_{v,i+1},
\qquad
M_t(v)=d_{v,i}
\]
for $1\leq i\leq r_v$,
it follows from \eqref{eq22} that for any integer $t$ with $u_{v,i}\leq t<u_{v,i+1}$,
\begin{equation}\label{eq23}
tm+v\in hA
\quad\Longleftrightarrow\quad t+d_{v,i}\leq h.
\end{equation}
If \(i<r_v\), take
\[
u=u_{v,i},\qquad U=u_{v,i+1},\qquad d=d_{v,i}
\]
in \eqref{eq221}.  The number of integers $t$ satisfying \eqref{eq23} is
\[
\trbinom{h-u_{v,i}-d_{v,i}+1}{1}
-
\trbinom{h-u_{v,i+1}-d_{v,i}+1}{1}.
\]
If \(i=r_v\), the record segment has no finite right endpoint, and
\eqref{eq222} gives only
\[
\trbinom{h-u_{v,r_v}-d_{v,r_v}+1}{1}.
\]
Hence,
\begin{align*}
\card{hA}
={}&\sum_{v=0}^{m-1}\Biggl\{
\sum_{i=1}^{r_v-1}
\trbinom{h-u_{v,i}-d_{v,i}+1}{1}
\\
&\qquad
-\sum_{i=1}^{r_v-1}
\trbinom{h-u_{v,i+1}-d_{v,i}+1}{1}
\\
&\qquad
+\trbinom{h-u_{v,r_v}-d_{v,r_v}+1}{1}
\Biggr\}.
\end{align*}

This completes the proof of Theorem \ref{thm1}.
\end{proof}
\section{Proof of Theorem \ref{thm2}}
For an integer $y$ and a positive integer $m$, let $[y]_m$ denote the least nonnegative residue of $y$ modulo $m$. We now prepare the shortest representation and generating function
lemmas used to prove Theorem \ref{thm2}. For convenience, recall that,
for $s\in\mathbb{N}$ and integer $N\geq1$,
\[
L_{s,N}=\left\lfloor\frac{N-1}{s}\right\rfloor,
\qquad
\tau_{s,N}=N-1-sL_{s,N}=[N-1]_s.
\]
\begin{lemma}
\label{lema31} Let \(B\subseteq \Nzero\) be a finite set containing \(0\) and \(1\), let $b$ be a positive integer with
\(b>\max B\), and put \(A=B\cup\{b\}\). For any integer \(0\leq v<b\) and any \(t\in\Nzero\), we have
\begin{equation}
\ell_A(tb+v)
=t+\min_{0\leq j\leq t}\{\ell_B(v+jb)-j\}.
\label{eq:quotientminimum}
\end{equation}
\end{lemma}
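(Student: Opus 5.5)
The plan is to prove \eqref{eq:quotientminimum} by two inequalities, classifying representations of $tb+v$ by how many copies of $b$ they use. Two facts about $B$ come first. Since $1\in B$, every nonnegative integer is representable by $B$, so $\ell_B(n)<+\infty$ for all $n\in\Nzero$. Since $\ell_A$ and $\ell_B$ minimize over multisets of positive elements, the element $0$ plays no role in them.

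\emph{Upper bound.} Fix $j$ with $0\leq j\leq t$. Take a shortest $B$-representation of $v+jb$, which uses $\ell_B(v+jb)$ summands. Add $t-j\geq 0$ copies of $b$. This is an $A$-representation of $tb+v$ with $\ell_B(v+jb)+t-j$ summands. Hence $\ell_A(tb+v)\leq t+\ell_B(v+jb)-j$ for every $j\in[0,t]$, and taking the minimum over $j$ gives the inequality $\leq$.

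\emph{Lower bound.} Take a shortest $A$-representation of $tb+v$, and let $w$ be the number of copies of $b$ in it. The remaining summands are elements of $B$ and sum to $tb+v-wb$, which is nonnegative. Because $0\leq v<b$, this forces $w\leq t$. Put $j=t-w\in[0,t]$. The $B$-part is then a $B$-representation of $v+jb$, so it has at least $\ell_B(v+jb)$ summands. Therefore
\[
\ell_A(tb+v)\geq w+\ell_B(v+jb)=t+\ell_B(v+jb)-j,
\]
which is at least the minimum on the right-hand side of \eqref{eq:quotientminimum}.

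The argument is essentially routine. The one step that needs care is the bookkeeping of the index change $j=t-w$, together with checking that $w\leq t$, which is exactly what $v<b$ guarantees. The hypothesis $b>\max B$ is not actually needed for the identity. It only ensures that $b\notin B$, so that the representation really does split into a $B$-part and a count of copies of $b$.
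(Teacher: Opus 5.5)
Your proof is correct and is essentially the paper's argument: the paper conditions on the number $y$ of copies of $b$ to write $\ell_A(tb+v)=\min_{0\leq y\leq t}\{y+\ell_B(tb+v-yb)\}$ and then reindexes with $j=t-y$. Your two inequalities spell out the justification for that first equality, in particular why $y\leq t$ follows from $v<b$, which the paper leaves implicit.
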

\begin{proof}Noting that \(A=B\cup\{b\}\), we have
\begin{align*}
\ell_A(tb+v)
=&\min_{0\leq y\leq t}\{y+\ell_B(tb+v-yb)\}\nonumber\\
=&\min_{0\leq j\leq t}\{t-j+\ell_B(v+jb)\}\nonumber\\
=&t+\min_{0\leq j\leq t}\{\ell_B(v+jb)-j\}
\end{align*}
for any integer \(0\leq v<b\) and any \(t\in\Nzero\).

This completes the proof of Lemma \ref{lema31}.
\end{proof}

\begin{lemma}\label{lema32} Let \(B\subseteq \Nzero\) be a finite set containing \(0\) and \(1\), let $b$ be a positive integer with
\(b>\max B\), and put \(A=B\cup\{b\}\). If
\(\ell_B(tb+v)\geq t+\ell_B(v)\) for any integer
\(0\leq v<b\) and any integer \(t\geq0\), then
\begin{equation}\label{eq31}
\ell_A(tb+v)=t+\ell_B(v)
\end{equation}
for any integer \(0\leq v<b\) and any integer \(t\geq0\).
\end{lemma}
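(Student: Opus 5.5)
We derive Lemma \ref{lema32} directly from Lemma \ref{lema31}. Fix an integer $0\leq v<b$ and $t\in\Nzero$. By \eqref{eq:quotientminimum},
\[
\ell_A(tb+v)=t+\min_{0\leq j\leq t}\{\ell_B(v+jb)-j\}.
\]
So it suffices to show that the minimum on the right equals $\ell_B(v)$.

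First, $B$ contains $0$ and $1$, so every nonnegative integer is representable by $B$. Hence all the quantities $\ell_B(v+jb)$ are finite, and the minimum is a genuine integer.

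For the upper bound, take $j=0$. This term equals $\ell_B(v)$, so the minimum is at most $\ell_B(v)$.

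For the lower bound, apply the hypothesis with $t$ replaced by $j$. It gives $\ell_B(jb+v)\geq j+\ell_B(v)$ for every $0\leq j\leq t$, that is, $\ell_B(v+jb)-j\geq \ell_B(v)$. So every term in the minimum is at least $\ell_B(v)$.

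Combining the two bounds, the minimum equals $\ell_B(v)$, and therefore $\ell_A(tb+v)=t+\ell_B(v)$, which is \eqref{eq31}.

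There is no real obstacle here. The only points needing care are that the hypothesis is used for each $j\in[0,t]$ rather than only for $t$ itself, and that finiteness of $\ell_B$ comes from $1\in B$.
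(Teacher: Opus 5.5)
Your proof is correct and follows the paper's argument: the paper uses Lemma~\ref{lema31} with the $j=0$ term to get $\ell_A(tb+v)\leq t+\ell_B(v)$, then gets the reverse inequality from the hypothesis. You make that second step explicit by applying the hypothesis at each $j\in[0,t]$, which the paper leaves implicit in ``combining the hypotheses.''
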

\begin{proof}
For any integer \(0\leq v<b\) and any integer \(t\geq0\), by Lemma \ref{lema31}, we have
\[
\ell_A(tb+v)
=t+\min_{0\leq j\leq t}\{\ell_B(v+jb)-j\}
\leq t+\ell_B(v).
\]
Combining the hypotheses, we get
\[
\ell_A(tb+v)=t+\ell_B(v)
\]
for any integer \(0\leq v<b\) and any integer \(t\geq0\).

This completes the proof of Lemma \ref{lema32}.
\end{proof}

\begin{lemma}\label{lema33}
Let $s, a$ be integers with $1\leq s<a$, and let
\[
B=\{0,1,\ldots,s,a\}.
\]
For any \(t\in\Nzero\) and any integer \(0\leq x<a\), we have
\[
\ell_{B}(ta+x)
=t+\left\lceil\frac{x}{s}\right\rceil.
\]
\end{lemma}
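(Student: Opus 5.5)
We prove the two inequalities $\ell_B(ta+x)\le t+\lceil x/s\rceil$ and $\ell_B(ta+x)\ge t+\lceil x/s\rceil$ separately.

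\emph{Upper bound.} Write $x=s\lfloor x/s\rfloor+\rho$ with $0\le\rho<s$. Take $t$ copies of $a$, $\lfloor x/s\rfloor$ copies of $s$, and, if $\rho>0$, one copy of $\rho\in[1,s-1]\subseteq B$. This represents $ta+x$ with exactly $t+\lceil x/s\rceil$ summands.

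\emph{Lower bound.} Take any representation
\[
ta+x=ya+\sum_{i=1}^{s} i\,x_i,\qquad y,x_i\in\Nzero .
\]
Put $S=\sum_i x_i$ and $R=\sum_i i x_i$, so $R\le sS$ and $R=(t-y)a+x$. We must show $y+S\ge t+\lceil x/s\rceil$.

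If $y\ge t$, then $S\ge0=\lceil 0/s\rceil$ settles the case $x=0$. For $x>0$, note $R\ge x>0$ forces $S\ge1$. Moreover $R=(t-y)a+x\ge x$ requires $y\le t$, so in fact $y=t$. Then $S\ge R/s=x/s$, and since $S$ is an integer, $S\ge\lceil x/s\rceil$.

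If $y<t$, set $j=t-y\ge1$. Then $S\ge\lceil (ja+x)/s\rceil$, and we need $\lceil (ja+x)/s\rceil\ge j+\lceil x/s\rceil$. Since $a\ge s+1$,
\[
\frac{ja+x}{s}\ge \frac{j(s+1)+x}{s}=j+\frac{x+j}{s}\ge j+\frac{x}{s}.
\]
Taking ceilings gives $\lceil (ja+x)/s\rceil\ge j+\lceil x/s\rceil$, which is exactly what we need.

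Combining the two inequalities yields $\ell_B(ta+x)=t+\lceil x/s\rceil$. The only step needing care is the $y<t$ case, where we must check that trading copies of $a$ for small parts never shortens the representation. This is precisely where the hypothesis $a>s$ is used, through $a\ge s+1$.
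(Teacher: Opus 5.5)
Your proof is correct and is essentially the paper's argument written out directly: the paper applies Lemma~\ref{lema31} with $B'=\{0,1,\ldots,s\}$ to get $\ell_B(ta+x)=t+\min_{0\le j\le t}\lceil (x+j(a-s))/s\rceil$ and uses $a>s$ to place the minimum at $j=0$. That is your $j=t-y$ comparison, and your explicit representation together with the bound $R\le sS$ proves the fact $\ell_{B'}(n)=\lceil n/s\rceil$, which the paper treats as immediate. One small slip: in the case $y\ge t$ the assertion $R\ge x$ is unjustified as written (there $R=(t-y)a+x\le x$), and the correct deduction is that $R\ge 0$ and $x<a$ force $y\le t$, hence $y=t$ and $R=x$, after which your argument goes through unchanged.
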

\begin{proof}We define $B'=\{0,1,\ldots,s\}$. For any \(t\in\Nzero\) and any integer \(0\leq x<a\), by Lemma \ref{lema31}, we have
\begin{equation}\label{eq32}
\ell_{B}(ta+x)=t+\min_{0\leq j\leq t}\{\ell_{B'}(x+ja)-j\}.
\end{equation}
By the definition of $B'$, we have
\[\ell_{B'}(x+ja)=\left\lceil\frac{x+ja}{s}\right\rceil.\]
It follows from \eqref{eq32} that
\[\ell_{B}(ta+x)=t+\min_{0\leq j\leq t}\left\lceil\frac{x+j(a-s)}{s}\right\rceil=t+\left\lceil\frac{x}{s}\right\rceil.\]

This completes the proof of Lemma \ref{lema33}.
\end{proof}

\begin{lemma}\label{lema335}
Let $s, N$ be positive integers, and let $L_{s,N}$ and $\tau_{s,N}$ be as defined in \eqref{eq11}.  Then
\begin{equation}
F_{s,N}(z)
:=\sum_{x=0}^{N-1}z^{\lceil x/s\rceil}
=1+s\sum_{j=1}^{L_{s,N}}z^j+\tau_{s,N}z^{L_{s,N}+1}
=\frac{P_{s,N}(z)}{1-z}.
\label{eq:prefixpolynomial}
\end{equation}
\end{lemma}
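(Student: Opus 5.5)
The plan is a direct count followed by a telescoping computation; there is no real obstacle, only bookkeeping of the last block.

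\emph{Step 1 (grouping by exponent).} Group the integers $x\in[0,N-1]$ by the value of $\lceil x/s\rceil$. The value $0$ is attained only at $x=0$. For $j\geq1$, the value $\lceil x/s\rceil=j$ holds exactly when $(j-1)s+1\leq x\leq js$, which is a block of $s$ consecutive integers.

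\emph{Step 2 (full and partial blocks).} Write $N-1=sL+\tau$ with $L=L_{s,N}$ and $0\leq\tau=\tau_{s,N}<s$. The blocks $j=1,\dots,L$ lie entirely inside $[1,N-1]$, since $Ls\leq N-1$, and each contributes $sz^j$. The remaining integers $x\in[sL+1,N-1]$ number exactly $\tau$. All of them lie in block $L+1$, because $N-1<s(L+1)$, so they contribute $\tau z^{L+1}$. When $\tau=0$ this term is simply absent, consistent with the formula. Adding these contributions gives the middle expression in \eqref{eq:prefixpolynomial}.

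\emph{Step 3 (closed form).} Multiply the middle expression by $1-z$. The factor $s\sum_{j=1}^{L}z^j(1-z)$ telescopes to $sz-sz^{L+1}$. Hence
\[
(1-z)\Bigl(1+s\sum_{j=1}^{L}z^j+\tau z^{L+1}\Bigr)
=1-z+sz-sz^{L+1}+\tau z^{L+1}-\tau z^{L+2}.
\]
This equals $1+(s-1)z+(\tau-s)z^{L+1}-\tau z^{L+2}=P_{s,N}(z)$. Dividing by $1-z$ gives the last equality.

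\emph{Edge case $L=0$.} This occurs when $N\leq s$. The sum over $j$ is then empty, and the telescoped term $sz-sz^{L+1}$ vanishes, since $sz^{L+1}=sz$. So the identity still holds.
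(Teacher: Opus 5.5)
Your proof is correct and follows essentially the same route as the paper: count the exponents $\lceil x/s\rceil$ over $[0,N-1]$ to obtain $1+s\sum_{j=1}^{L_{s,N}}z^j+\tau_{s,N}z^{L_{s,N}+1}$, then multiply by $1-z$ and telescope to get $P_{s,N}(z)$. The only difference is cosmetic. You group $x$ by the fibers $[(j-1)s+1,js]$ of the ceiling map, whereas the paper writes $x=js+t$ with $0\leq t<s$ and evaluates $\lceil t/s\rceil$; your version makes the partial last block and the case $L_{s,N}=0$ slightly more transparent.
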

\begin{proof}Since
\[N-1=L_{s,N}s+\tau_{s,N}, ~~~0\leq \tau_{s,N}<s,\]
 it follows that
\begin{align*}
F_{s,N}(z)
=&\sum_{x=0}^{N-1}z^{\lceil x/s\rceil}\nonumber\\
=&\sum_{j=0}^{L_{s,N}-1}\sum_{t=0}^{s-1}z^{\lceil (js+t)/s\rceil}+z^{L_{s,N}}+\sum_{t=1}^{\tau_{s,N}}z^{\lceil (L_{s,N}s+t)/s\rceil}\nonumber\\
=&\sum_{j=0}^{L_{s,N}-1}\sum_{t=0}^{s-1}z^{j+\lceil t/s\rceil}+z^{L_{s,N}}+\sum_{t=1}^{\tau_{s,N}}z^{L_{s,N}+\lceil t/s\rceil}\nonumber\\
=&\sum_{j=0}^{L_{s,N}-1}(\sum_{t=1}^{s-1}z^{j+1}+z^j)+z^{L_{s,N}}+\sum_{t=1}^{\tau_{s,N}}z^{L_{s,N}+1}\nonumber\\
=&(s-1)\sum_{j=0}^{L_{s,N}-1}z^{j+1}+\sum_{j=0}^{L_{s,N}-1}z^j+z^{L_{s,N}}+\sum_{t=1}^{\tau_{s,N}}z^{L_{s,N}+1}\nonumber\\
=&(s-1)\sum_{j=1}^{L_{s,N}}z^{j}+\sum_{j=0}^{L_{s,N}}z^j+\sum_{t=1}^{\tau_{s,N}}z^{L_{s,N}+1}\nonumber\\
=&1+s\sum_{j=1}^{L_{s,N}}z^{j}+\tau_{s,N}z^{L_{s,N}+1},
\end{align*}
and so
\[
(1-z)F_{s,N}(z)
=1+(s-1)z+(\tau_{s,N}-s)z^{L_{s,N}+1}
-\tau_{s,N}z^{L_{s,N}+2}
=P_{s,N}(z).
\]

This completes the proof of Lemma \ref{lema335}.
\end{proof}
\begin{lemma}\label{lema35}
Let $s,a,b$ be integers with $1\leq s<a<b$, let
\[
B=\{0,1,\ldots,s,a\},\qquad A=B\cup\{b\},
\]
and write $b=qa+r$, where $0\leq r<a$.  For $h\in\mathbb{N}_0$, let
\[
U_h=\bigcup_{v=0}^{b-1}
\bigl\{tb+v:t\in \Nzero,~~0\leq  t\leq h-\ell_B(v)\bigr\}.
\]
Then
\begin{align*}
\card{U_h}
={}&
\sum_e p_{a,e}
\left\{
\trbinom{h-e+3}{3}
-\trbinom{h-q-e+3}{3}
\right\}
\notag\\
&+\sum_e p_{r,e}
\trbinom{h-q-e+2}{2}
\end{align*}
for any $h\in\mathbb{N}_0$.
\end{lemma}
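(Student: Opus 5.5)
Since the sets $\{tb+v: 0\le t\le h-\ell_B(v)\}$ for different $v\in[0,b-1]$ are disjoint, we have
\[
\card{U_h}=\sum_{v=0}^{b-1}\pos{h-\ell_B(v)+1}.
\]
The plan is to compute the generating function $G(z)=\sum_{h\ge0}\card{U_h}z^h$ and read off its coefficients. If $f(z)=\sum_{v=0}^{b-1}z^{\ell_B(v)}$, then $\sum_{h\ge0}\pos{h-\ell+1}z^h=z^{\ell}/(1-z)^2$. Hence
\[
G(z)=\frac{f(z)}{(1-z)^2}.
\]

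Next, compute $f$ using Lemma~\ref{lema33}. Write $v=ta+x$ with $0\le x<a$. Since $b=qa+r$, the residues $v\in[0,b-1]$ are exactly the pairs with $0\le t\le q-1$ and $0\le x<a$, together with $t=q$ and $0\le x<r$. Lemma~\ref{lema33} gives $\ell_B(ta+x)=t+\lceil x/s\rceil$. Therefore, by Lemma~\ref{lema335},
\[
f(z)=\Bigl(\sum_{t=0}^{q-1}z^t\Bigr)F_{s,a}(z)+z^qF_{s,r}(z)
=\frac{(1-z^q)P_{s,a}(z)}{(1-z)^2}+\frac{z^qP_{s,r}(z)}{1-z}.
\]
The convention $P_{s,0}=0$ covers the case $r=0$, where the second block is empty. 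This gives
\[
G(z)=\frac{(1-z^q)P_{s,a}(z)}{(1-z)^4}+\frac{z^qP_{s,r}(z)}{(1-z)^3}.
\]

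Finally, extract coefficients using $[z^h]\,z^e/(1-z)^4=\trbinom{h-e+3}{3}$ and $[z^h]\,z^e/(1-z)^3=\trbinom{h-e+2}{2}$, both valid for all integers $h,e\ge0$. Expanding $P_{s,a}=\sum_e p_{a,e}z^e$ and $P_{s,r}=\sum_e p_{r,e}z^e$ term by term yields exactly the stated formula for every $h\in\Nzero$.

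The only delicate points are bookkeeping. The first is to check that the truncated binomial $\trbinom{x}{j}$ gives the right value (zero) when the shifted index is negative. For $x<j$ the coefficient $\binom{x+j}{j}$-type expression must vanish, and with $x=h-e+3<3$, i.e.\ $h<e$, the coefficient is $0$ as required. The second is the degenerate case $r=0$ (and $L_{s,r}$ when $r<s$), which I expect to follow directly from Lemma~\ref{lema335} and the convention $P_{s,0}=0$.
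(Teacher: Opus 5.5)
Your proposal is correct and follows the paper's proof essentially step for step. Both arguments use the ray generating function $z^{\ell_B(v)}/(1-z)^2$, split $[0,b-1]$ into $q$ full blocks of length $a$ plus a partial block of length $r$ via Lemma~\ref{lema33}, apply $F_{s,N}=P_{s,N}/(1-z)$ from Lemma~\ref{lema335}, and extract coefficients with truncated binomials. Your explicit remark that the rays for distinct $v$ are disjoint, so that $\card{U_h}=\sum_{v}\pos{h-\ell_B(v)+1}$, is a small justification the paper leaves implicit.
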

\begin{proof}
 For a given integer \(v\in[0,b-1]\), the generating function of the corresponding
ray is
\[
\sum_{h\geq0}
\card{\{t:t\in \Nzero,~0\leq t\leq h-\ell_B(v)\}}z^h
=\frac{z^{\ell_B(v)}}{(1-z)^2}.
\]
Hence,
\begin{equation}\label{eq38}
\sum_{h\geq0}\card{U_h}z^h
=
\frac{\sum_{v=0}^{b-1}z^{\ell_B(v)}}{(1-z)^2}.
\end{equation}
For any integer $0\leq u\leq q$ and any integer $0\leq x<a$, by Lemma
\ref{lema33}, we have
\[
\ell_B(ua+x)=u+\ell_B(x).
\]
Therefore,
\begin{align}\label{eq39}
\sum_{v=0}^{b-1}z^{\ell_B(v)}
&=\sum_{u=0}^{q-1}\sum_{x=0}^{a-1}z^{\ell_B(ua+x)}+\sum_{x=0}^{r-1}z^{\ell_B(qa+x)}\notag\\
&=\sum_{u=0}^{q-1}\sum_{x=0}^{a-1}z^{u+\ell_B(x)}+\sum_{x=0}^{r-1}z^{q+\ell_B(x)}\notag\\
&=\frac{1-z^q}{1-z}\sum_{x=0}^{a-1}z^{\ell_B(x)}+z^q\sum_{x=0}^{r-1}z^{\ell_B(x)}.
\end{align}
By Lemmas \ref{lema33} and \ref{lema335}, we have
\[
\sum_{x=0}^{N-1}z^{\ell_B(x)}=\sum_{x=0}^{N-1}z^{\lceil x/s\rceil}=F_{s,N}(z)=\frac{P_{s,N}(z)}{1-z}=\frac{\sum_e p_{N,e}z^e}{1-z}
\]
for any integer $1\leq N\leq a$; for $N=0$, both sides are $0$ by convention.
It follows from \eqref{eq38} and \eqref{eq39} that
\begin{equation}\label{eq310}
\sum_{h\geq0}\card{U_h}z^h
=
\frac{(1-z^q)(\sum_e p_{a,e}z^e)}{(1-z)^4}
+
\frac{z^q(\sum_e p_{r,e}z^e)}{(1-z)^3}.
\end{equation}
Since
\begin{equation*}
\frac{z^\alpha}{(1-z)^{j+1}}
=
\sum_{h\geq0}
\trbinom{h-\alpha+j}{j}z^h
\end{equation*}
for any \(j,\alpha\in\Nzero\),
it follows that the coefficient of \(z^h\) in the first term of
\eqref{eq310} is
\[
\sum_e p_{a,e}
\left\{
\trbinom{h-e+3}{3}
-\trbinom{h-q-e+3}{3}
\right\},
\]
and its coefficient in the second term is
\[
\sum_e p_{r,e}
\trbinom{h-q-e+2}{2}.
\]
Hence,
\begin{align*}
\card{U_h}
={}&
\sum_e p_{a,e}
\left\{
\trbinom{h-e+3}{3}
-\trbinom{h-q-e+3}{3}
\right\}
\notag\\
&+\sum_e p_{r,e}
\trbinom{h-q-e+2}{2}
\end{align*}
for any $h\in\mathbb{N}_0$.

This completes the proof of Lemma \ref{lema35}.

\end{proof}

\begin{proof}[Proof of Theorem \ref{thm2}]
\emph{Sufficiency.}
Let $B=\{0,1,\ldots,s,a\}$.  Then $A=B\cup\{b\}$.  For any
integer $n\geq0$, write $n=ua+x$, where $u\geq0$ and $0\leq x<a$.
It follows from Lemma \ref{lema33} that
\begin{align}\label{eq41}
\ell_B(n+b)-\ell_B(n)
=&\ell_B\left(\left(u+q+\left\lfloor\frac{x+r}{a}\right\rfloor\right)a+[x+r]_a\right)-\ell_B(ua+x)\notag\\
=&\left(u+q+\left\lfloor\frac{x+r}{a}\right\rfloor\right)+\ell_B([x+r]_a)-(u+\ell_B(x))\notag\\
=&q+\left\lfloor\frac{x+r}{a}\right\rfloor+\ell_B([x+r]_a)-\ell_B(x)\notag\\
=&q+\left\lfloor\frac{x+r}{a}\right\rfloor+\left\lceil\frac{[x+r]_a}{s}\right\rceil-\left\lceil \frac{x}{s}\right\rceil.
\end{align}
Since $qa+r=b>a$, we have $q\geq1$.
If $r=0$, then by \eqref{eq41}, we have
\[\ell_B(n+b)-\ell_B(n)=q\geq1.\]
If $qs+r\geq a$, then by \eqref{eq41}, we have
\begin{align}\label{eq42}
\ell_B(n+b)-\ell_B(n)
=&q+\left\lfloor\frac{x+r}{a}\right\rfloor+\left\lceil\frac{[x+r]_a}{s}\right\rceil-\left\lceil \frac{x}{s}\right\rceil\notag\\
\geq&\left\lceil \frac{a-r}{s}\right\rceil+\left\lfloor\frac{x+r}{a}\right\rfloor+\left\lceil\frac{[x+r]_a}{s}\right\rceil-\left\lceil \frac{x}{s}\right\rceil.
\end{align}
If $0\leq x<a-r$, then by \eqref{eq42}, we have
\begin{align*}
\ell_B(n+b)-\ell_B(n)
\geq&\left\lceil \frac{a-r}{s}\right\rceil+\left\lfloor\frac{x+r}{a}\right\rfloor+\left\lceil\frac{[x+r]_a}{s}\right\rceil-\left\lceil \frac{x}{s}\right\rceil\notag\\
=&\left\lceil \frac{a-r}{s}\right\rceil+\left\lceil\frac{x+r}{s}\right\rceil-\left\lceil \frac{x}{s}\right\rceil\notag\\
\geq&\left\lceil \frac{a-r}{s}\right\rceil\geq1.
\end{align*}
If $a-r\leq x<a$, then by \eqref{eq42}, we have
\begin{align*}
\ell_B(n+b)-\ell_B(n)
\geq&\left\lceil \frac{a-r}{s}\right\rceil+\left\lfloor\frac{x+r}{a}\right\rfloor+\left\lceil\frac{[x+r]_a}{s}\right\rceil-\left\lceil \frac{x}{s}\right\rceil\notag\\
=&\left\lceil \frac{a-r}{s}\right\rceil+1+\left\lceil\frac{x-(a-r)}{s}\right\rceil-\left\lceil \frac{x}{s}\right\rceil\notag\\
\geq&1.
\end{align*}
Hence, if $r=0$ or $qs+r\geq a$, then
\begin{equation}\label{eq43}
\ell_B(n+b)-\ell_B(n)\geq1
\end{equation}
for any integer $n\geq0$. For any
\(0\leq v<b\) and any \(t\geq1\), by \eqref{eq43}, we have
\[\ell_B(tb+v)\geq\ell_B((t-1)b+v)+1\geq\cdots\geq \ell_B(v)+t.\]
It follows from Lemma \ref{lema32} that
\[
\ell_A(tb+v)=t+\ell_B(v)
\]
for any integer \(0\leq v<b\) and any integer \(t\geq0\), and so
\[
tb+v\in hA\quad\Longleftrightarrow\quad
\ell_A(tb+v)\leq h
\quad\Longleftrightarrow\quad
0\leq t\leq h-\ell_B(v).
\]
Hence,
\[
hA=\bigcup_{v=0}^{b-1}
\bigl\{tb+v:t\in \Nzero,~0\leq t\leq h-\ell_B(v)\bigr\}=U_h
\]
for any $h\in\mathbb{N}$.
Lemma \ref{lema35} now gives \eqref{eq01}.

\emph{Necessity.}
Suppose that $r>0$ and $qs+r<a$. It is enough to show that
\eqref{eq01} is not true for $h=q+1$. Let
\[
U_{q+1}=\bigcup_{v=0}^{b-1}
\bigl\{tb+v:t\in \Nzero,~0\leq t\leq q+1-\ell_B(v)\bigr\}.
\]
We prove that $U_{q+1}\subsetneqq(q+1)A$.  Let
\[
x=a-r,\qquad d=\left\lceil\frac{x}{s}\right\rceil.
\]
Then $d\geq q+1$ and
\[b+x=qa+r+a-r=(q+1)a\in (q+1)A.\]
If $b+x\in U_{q+1}$, then there exist integers $0\leq v\leq b-1$ and integer
$0\leq t\leq q+1-\ell_B(v)$ such that $b+x=tb+v$.  Since
$0\leq x,v\leq b-1$, uniqueness modulo $b$ gives $v=x$ and $t=1$.
It follows that $1+\ell_B(x)\leq q+1$.  By Lemma \ref{lema33}, we have
\[1+\ell_B(x)=1+\left\lceil\frac{x}{s}\right\rceil=1+d\geq q+2,\]
a contradiction.  Thus $b+x\notin U_{q+1}$.

On the other hand, let $y\in U_{q+1}$.  Then
$y=t_0b+v_0$ for some integers $0\leq v_0\leq b-1$ and
$0\leq t_0\leq q+1-\ell_B(v_0)$. By Lemma \ref{lema31}, we have
\[
\ell_A(y)
=t_0+\min_{0\leq j\leq t_0}\{\ell_B(v_0+jb)-j\}
\leq t_0+\ell_B(v_0)\leq q+1,
\]
and so $y\in(q+1)A$. Therefore,
\[U_{q+1}\subsetneqq(q+1)A,\]
that is $|U_{q+1}|<|(q+1)A|$. By Lemma \ref{lema35},  \eqref{eq01} is not true for $h=q+1$.

This completes the proof of Theorem \ref{thm2}.
\end{proof}

\end{document}